\documentclass[12pt]{article}
\usepackage{amssymb,amsmath,amsthm}
\usepackage{epsfig}
\usepackage{psfrag}

\newcommand{\bt}{\begin{Theorem}}
\newcommand{\et}{\end{Theorem}}
\newcommand{\bi}{\beign{itemize}}
\newcommand{\ei}{\end{itemize}}
\newcommand{\bea}{\begin{eqnarray}}
\newcommand{\eea}{\end{eqnarray}}
\newtheorem{Theorem}{\sc Theorem}[section]
\newtheorem{Lemma}[Theorem]{\sc Lemma}
\newtheorem{Proposition}[Theorem]{\sc Proposition}
\newtheorem{Corollary}[Theorem]{\sc Corollary}
\newtheorem{Definition}[Theorem]{\sc Definition}

\newtheorem{Remark}[Theorem]{\sc Remark}

\newcommand{\be}{\begin{equation}}
\newcommand{\ee}{\end{equation}}

\def\C{\mathbb C}
\newcommand{\CA}{\mathcal{A}}
\newcommand{\CB}{\mathcal{B}}
\newcommand{\CC}{\mathcal{C}}

\def\be{\begin{equation}}
\def\ee{\end{equation}}
\def\bt{\begin{Theorem}}
\def\et{\end{Theorem}}
\def\bi{\begin{itemize}}
\def\ei{\end{itemize}}
\def\bea{\begin{eqnarray}}
\def\eea{\end{eqnarray}}
\def\beast{\begin{eqnarray*}}
\def\eeast{\end{eqnarray*}}
\def\ben{\begin{enumerate}}
\def\een{\end{enumerate}}

\def\rar{\rightarrow}
\def\Rar{\Rightarrow}

\def\Lra{{\Leftrightarrow}}

\begin{document}

\title{On a tensor-analogue of the Schur product}
\author{K. Sumesh and V.S. Sunder\\
Institute of Mathematical Sciences\\
Chennai 600113\\
INDIA\\
{\small email: sumeshkpl@gmail.com, sunder@imsc.res.in}}

\maketitle

\begin{abstract}
We consider the {\em tensorial Schur product} $R \circ^\otimes S = [r_{ij} \otimes s_{ij}]$ for $R \in M_n(\CA), S\in M_n(\CB),$ with $\CA, \CB ~\mbox{unital}~ C^*$-algebras, verify that such a `tensorial Schur product' of positive operators is again positive, and then use this fact to prove (an  apparently  marginally more general version of) the classical result of Choi  that a linear map $\phi:M_n \rar M_d$  is completely positive if and only if $[\phi(E_{ij})] \in M_n(M_d)^+$, where of course $\{E_{ij}:1 \leq i,j \leq n\}$ denotes the usual system of matrix units in $M_n (:= M_n(\C))$. We also discuss some other corollaries of the main result.
\end{abstract}

\section{The result}

\noindent We start with some notation: (We assume, for convenience, that all our $C^*$-algebras are unital.) We denote an element of a matrix algebra by capital letters, such as $R$, and denote its entries by either $[R]_{ij}$ or the corresponding lower case letter $r_{ij}$. This is primarily because $[R^*]_{ij} = (r_{ji})^* \neq [R^*]_{ji}!$ 

\begin{Definition}\label{phi-A-tsp}
\ben
    \item 
         If $\CA, \CB$ are $C^*$-algebras, and $\phi : M_n \rar \CB$ is a positive map, define  $\phi_{\CA}:A \otimes M_n \rar \CA \otimes_{alg} \CB$ (where $\CA \otimes_{alg} \CB$ denotes the algebraic tensor product of $\CA$ and $\CB$) by $\phi_{\CA}  = id_{\CA} \otimes \phi$.
    \item
         If $A = [a_{ij}] \in M_n(\CA), B=[b_{ij}] \in M_n(\CB)$, define $A \circ^\otimes B = [a_{ij}\otimes b_{ij}] \in M_n(\CA \otimes_{alg}\CB)$.
\een 
\end{Definition}

For later use, we isolate a lemma, whose elementary verification we omit.

\begin{Lemma}\label{jlem}
The map $\pi: M_n(\CA) \otimes M_k \rar M_{nk}(\CA)$ defined by
\be \label{ipjq}
[\pi(R \otimes C)]_{i\alpha,j\beta} = c_{\alpha\beta}r_{ij}
\ee
is a $C^*$-algebra isomorphism for any $C^*$-algebra $\CA$; in the sequel, we shall simply use this $\pi$ to make the identification $M_n(\CA) \otimes M_k = M_{nk}(\CA)$. In particular, $\CA \otimes M_k = M_{k}(\CA)$.
\end{Lemma}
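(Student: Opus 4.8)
The plan is to build $\pi$ in stages: first as a linear map out of the algebraic tensor product, then verify it is a $*$-homomorphism, then a bijection, and finally upgrade everything to the $C^*$-level. Since the formula (\ref{ipjq}) is bilinear in the pair $(R,C)$, the universal property of the algebraic tensor product immediately yields a well-defined linear map $\pi_0 : M_n(\CA) \otimes_{alg} M_k \rar M_{nk}(\CA)$ determined on elementary tensors by (\ref{ipjq}).

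The first substantive step is to check that $\pi_0$ is a $*$-homomorphism, and this is a direct index computation. For multiplicativity, using $(R \otimes C)(R' \otimes C') = RR' \otimes CC'$ one computes
\be
[\pi_0(RR' \otimes CC')]_{i\alpha, j\beta} = \Big(\sum_\gamma c_{\alpha\gamma} c'_{\gamma\beta}\Big)\Big(\sum_\ell r_{i\ell} r'_{\ell j}\Big),
\ee
which is exactly $\sum_{\ell,\gamma} [\pi_0(R \otimes C)]_{i\alpha, \ell\gamma}\, [\pi_0(R' \otimes C')]_{\ell\gamma, j\beta}$, i.e.\ the $(i\alpha, j\beta)$ entry of the product in $M_{nk}(\CA)$. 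For the $*$-structure one checks that $[\pi_0((R \otimes C)^*)]_{i\alpha, j\beta} = \overline{c_{\beta\alpha}}\,(r_{ji})^* = ([\pi_0(R\otimes C)]_{j\beta, i\alpha})^*$, the one place to be careful being that the scalar entries of $C$ are central and hence pass through the adjoint on $\CA$ without trouble (this is exactly where the convention $[R^*]_{ij} = (r_{ji})^*$ flagged in the preamble is used).

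Next I would show $\pi_0$ is a bijection. The cleanest route is to identify it with a classical isomorphism: under the standard vector-space identifications $M_n(\CA) \cong \CA \otimes M_n$ and $M_{nk}(\CA) \cong \CA \otimes M_{nk}$, the map $\pi_0$ is nothing but $id_\CA \otimes \theta$, where $\theta : M_n \otimes M_k \rar M_{nk}$ is the usual Kronecker-product $*$-isomorphism; indeed (\ref{ipjq}) is precisely the Kronecker indexing. Alternatively, one exhibits the inverse directly, sending the matrix in $M_{nk}(\CA)$ carrying $a \in \CA$ in entry $(i\alpha, j\beta)$ and $0$ elsewhere to $(a\,E_{ij}) \otimes E_{\alpha\beta}$; either way $\pi_0$ carries a spanning family bijectively onto one, so it is a linear bijection and hence a $*$-isomorphism of $*$-algebras.

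Finally, to reach the $C^*$-statement, I would invoke that $M_k$ is finite-dimensional (in particular nuclear), so the algebraic tensor product $M_n(\CA) \otimes_{alg} M_k$ already carries a unique $C^*$-norm and is complete; thus $M_n(\CA) \otimes M_k = M_n(\CA) \otimes_{alg} M_k$ and $\pi = \pi_0$. Since a $*$-isomorphism between $C^*$-algebras is automatically isometric, $\pi$ is an isometric $C^*$-isomorphism, which proves the lemma, the final assertion $\CA \otimes M_k = M_k(\CA)$ being the case $n=1$. I expect the only genuine subtlety to be this last point --- recognizing that finite-dimensionality of $M_k$ removes any tensor-norm ambiguity and that bijective $*$-homomorphisms are automatically isometric --- while the multiplicativity and $*$-compatibility are the routine bookkeeping the authors rightly omit.
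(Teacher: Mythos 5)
Your proof is correct and is, in substance, exactly the ``elementary verification'' that the paper explicitly omits for Lemma \ref{jlem}: the bilinear extension to the algebraic tensor product, the index computations for multiplicativity and the adjoint (respecting the convention $[R^*]_{ij}=(r_{ji})^*$), the explicit inverse on matrix units, and the observation that finite-dimensionality of $M_k$ gives a unique $C^*$-norm and completeness on $M_n(\CA)\otimes_{alg}M_k$ (so the bijective $*$-homomorphism is automatically isometric) are precisely the routine steps the authors had in mind. One small caution: the clause ``carries a spanning family bijectively onto one'' does not by itself yield linear bijectivity, but the explicit inverse you describe, sending the matrix with $a$ in entry $(i\alpha,j\beta)$ and $0$ elsewhere to $(aE_{ij})\otimes E_{\alpha\beta}$, closes that gap immediately.
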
 

\begin{Remark}\label{lr} There is clearly a right version of the above Lemma: i.e., $M_k \otimes M_n(\CA) = M_{kn}(\CA)$. \end{Remark}

\begin{Proposition}\label{p1} 
\be \label{stp}
    R\in M_n(\CA)^+, S \in M_n(\CB)^+ \Rar R \circ^\otimes S \in M_n(\CC)^+,
\ee 
where $\CC$ denotes - here and in the rest of this short note - any $C^*$-algebra containing $\CA \otimes_{alg}\CB$.
In particular,
\be \label{sumcor}
    \sum_{i,j=1}^n r_{ij}\otimes s_{ij} \in M_n(\CC)^+.
\ee
\end{Proposition}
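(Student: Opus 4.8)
The plan is to imitate the classical proof of Schur's product theorem, in which the Hadamard product of two positive matrices is realised as a compression (a principal corner) of their Kronecker product. Concretely, I would first assemble out of $R$ and $S$ a genuine tensor (Kronecker) product living in $M_{n^2}(\CC)$, show that it is positive, and then observe that $R \circ^\otimes S$ is exactly its compression to the $n$ ``diagonal'' indices $(i,i)$, which must then be positive as well.

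First I would build the Kronecker product. Using Lemma \ref{jlem} and Remark \ref{lr} to identify $M_n(\CA)\otimes M_n(\CB)$ with $M_{n^2}$-matrices over $\CA\otimes_{alg}\CB$, I form the element $T$ whose $\big((i,k),(j,l)\big)$ entry is $r_{ij}\otimes s_{kl}$; since $M_{n^2}$ is finite dimensional this sits unambiguously inside the $C^*$-algebra $M_{n^2}(\CC)$. To see $T\ge 0$ I would not invoke any particular $C^*$-cross-norm but instead factor it directly. As $M_n(\CA)$ and $M_n(\CB)$ are $C^*$-algebras and $R,S$ are positive, the square roots $X=R^{1/2}\in M_n(\CA)$ and $Y=S^{1/2}\in M_n(\CB)$ exist, so that (using the conventions of the introduction) $r_{ij}=\sum_m (x_{mi})^* x_{mj}$ and $s_{kl}=\sum_p (y_{pk})^* y_{pl}$. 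Substituting gives
\be
  r_{ij}\otimes s_{kl} \;=\; \sum_{m,p} (x_{mi}\otimes y_{pk})^*\,(x_{mj}\otimes y_{pl}),
\ee
which exhibits $T=Z^*Z$ for the matrix $Z\in M_{n^2}(\CC)$ with $[Z]_{(m,p),(j,l)}=x_{mj}\otimes y_{pl}$; hence $T\in M_{n^2}(\CC)^+$.

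Next comes the compression. Let $V$ be the scalar $n^2\times n$ isometry $e_i\mapsto e_{(i,i)}$. Compressing $T$ to the diagonal indices yields $[V^*TV]_{ij}=[T]_{(i,i),(j,j)}=r_{ij}\otimes s_{ij}$, i.e.\ $V^*TV=R\circ^\otimes S$. Since $V^*TV=(ZV)^*(ZV)$ is visibly of the form $W^*W$ with $W=ZV$, we conclude $R\circ^\otimes S\in M_n(\CC)^+$, proving \eqref{stp}. (Equivalently, one may bypass $T$ altogether and simply note directly that $R\circ^\otimes S=W^*W$.) For \eqref{sumcor} I would compress once more, now by the all-ones vector $\mathbf 1=(1,\dots,1)^t$, so that $\mathbf 1^*(R\circ^\otimes S)\mathbf 1=\sum_{i,j}r_{ij}\otimes s_{ij}$ is positive as well.

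The only genuinely delicate point, and hence the step I expect to be the main obstacle, is the meaning of positivity for $R\otimes S$ across the various tensor-product completions, since $\CA\otimes\CB$ carries no canonical $C^*$-norm. The explicit factorisation $T=Z^*Z$, with $Z$ having entries in $\CA\otimes_{alg}\CB\subseteq\CC$, is designed precisely to sidestep this: it makes positivity intrinsic and manifestly independent of the choice of $\CC$, since $x^*x\ge 0$ holds in every $C^*$-algebra. Beyond this, the remaining work is purely the index bookkeeping needed to identify the diagonal corner of the Kronecker product with the tensorial Schur product.
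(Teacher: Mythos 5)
Your proposal is correct and takes essentially the same route as the paper: realising $R\circ^\otimes S$ as the compression of the Kronecker product $R\otimes S\in M_{n^2}(\CC)$ to the diagonal indices $(i,i)$ via the same (partial isometry) $V$, and then obtaining eqn.~(\ref{sumcor}) by a further compression with the all-ones column. The only difference is that where the paper simply asserts $R\otimes S\in M_{n^2}(\CC)^+$, you justify it explicitly by the factorisation $T=Z^*Z$ built from the square roots $R^{1/2}$ and $S^{1/2}$, a minor but careful tightening that makes the independence of the choice of $\CC$ manifest.
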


\begin{proof} 
To deduce eqn. (\ref{sumcor}) from eqn. (\ref{stp}), we  let $1_n \in M_{n \times 1}(\CC)^+$ be the $n \times 1$ column-vector with all entries equal to $1_\CA \otimes 1_\CB$, and note that 
\[  \sum_{i,j=1}^n r_{ij}\otimes s_{ij}  = 1_n^* ( R \circ^\otimes S) 1_n.\]

Now for the slightly less immediate eqn. (\ref{stp}). By assumption, $R \otimes S \in M_{n^2}(\CC)^+$. 

In the sequel all the variables $i,j,k,l,p,q$ will range over the set $\{1,2,...,n\}$ and we shall simply write $\sum_k$ for $\sum_{k=1}^n$. 

Now define $V \in M_{n \times n^2}(\CC)$ by
\[[V]_{i,pq} = \delta_{pi} \delta_{qi} (1_A \otimes 1_B).\]
Then,
\beast
   [V(R\otimes S) V^*]_{ij} &=& \sum_{p,q,k,l} [V]_{i,pq} [R\otimes S]_{pq,kl} [V^*]_{kl,j}\\
                            &=& [R\otimes S]_{ii,jj}\\
                            &=& r_{ij}\otimes s_{ij}
\eeast
and so,
\[V(R\otimes S) V^* = R \circ^\otimes S.\]
The proof of the Proposition is complete.
\end{proof}

\begin{Remark}\label{tspinalg}
Note that the proof shows that $R\circ^\otimes S \in M_n(\CA \otimes_{alg} \CB)$.
\end{Remark}

The classical result of Choi alluded to in the abstract is the equivalence $2. \Lra 3. $ in the following Corollary, for the case $\CB = M_d$ (see \cite{Choi}).

\begin{Corollary}\label{choi}
The following conditions on a linear map $\phi : M_n \rar \CB$ are equivalent:
\begin{enumerate}
     \item
          For any $C^*$-algebra $\CA$, the map $\phi_\CA (:= id_\CA \otimes \phi) : \CA \otimes M_n \rar \CC$ is a positive map for any $C^*$-algebra $\CC$ as in Proposition \ref{p1}.     
    \item
          The map $\phi$ is CP.\footnote{For an explanation of terms like CP (= completely positive) and operator system, the reader may consult \cite{Pis},  for instance.}
         \item
          $[\phi(E_{ij})] \in M_n(\CB)^+$.
\end{enumerate}
\end{Corollary}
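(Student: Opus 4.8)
The plan is to establish the cyclic chain $3 \Rar 1 \Rar 2 \Rar 3$, arranged so that the substantive direction — the one encoding the hard half of Choi's theorem — is precisely the implication that drops out of Proposition \ref{p1}. I do not expect a genuine obstacle at any single step: the real work has been front-loaded into the positivity of the tensorial Schur product, and what remains is to wire the three conditions to it. The two places that warrant a little care are recognizing the positivity of the canonical element $[E_{ij}]$ in $2 \Rar 3$, and keeping the identifications of Lemma \ref{jlem} and its right-handed form (Remark \ref{lr}) straight in the other two steps.

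For $3 \Rar 1$, I would take an arbitrary positive $R \in (\CA \otimes M_n)^+$ and invoke Lemma \ref{jlem} to identify $\CA \otimes M_n = M_n(\CA)$, so that $R = [r_{ij}] \in M_n(\CA)^+$. Under this identification $R = \sum_{i,j} r_{ij} \otimes E_{ij}$, whence $\phi_\CA(R) = \sum_{i,j} r_{ij} \otimes \phi(E_{ij})$. Writing $S = [\phi(E_{ij})]$, hypothesis $3$ says exactly that $S \in M_n(\CB)^+$, and then equation (\ref{sumcor}) of Proposition \ref{p1} yields $\sum_{i,j} r_{ij} \otimes \phi(E_{ij}) \in \CC^+$ at once. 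Thus $\phi_\CA$ is positive, which is condition $1$. This is the crux of the note, and it is essentially a one-line consequence of Proposition \ref{p1}; the only thing to verify carefully is the bookkeeping of the identification that rewrites $R$ as $\sum_{i,j} r_{ij} \otimes E_{ij}$.

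For $1 \Rar 2$, I would specialize the ``any $\CA$'' in condition $1$ to $\CA = M_k$ and pick the convenient target $\CC = M_k(\CB)$. Using the right-handed identification of Remark \ref{lr}, the map $\phi_{M_k} = id_{M_k} \otimes \phi$ becomes the ampliation $M_k(M_n) \rar M_k(\CB)$, and its positivity is precisely $k$-positivity of $\phi$; since $k$ is arbitrary, this is complete positivity. For $2 \Rar 3$, I would run the standard Choi observation: the matrix $[E_{ij}] \in M_n(M_n)$ is positive, since under $M_n(M_n) = M_n \otimes M_n$ it is the rank-one element $vv^*$ for $v = \sum_i e_i \otimes e_i$ (equivalently, a positive multiple of the projection onto the maximally entangled vector). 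Applying the $n$-positive map $id_{M_n} \otimes \phi$ to this positive element produces $[\phi(E_{ij})] \in M_n(\CB)^+$, which is condition $3$, closing the cycle.
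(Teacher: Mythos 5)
Your proposal is correct and takes essentially the same approach as the paper: the crucial implication $3 \Rar 1$ is proved exactly as there, by writing $R = \sum_{i,j} r_{ij} \otimes E_{ij}$ under the identification of Lemma \ref{jlem} and invoking eqn.~(\ref{sumcor}) of Proposition \ref{p1}. The remaining implications $1 \Rar 2$ (specializing $\CA = M_k$, $\CC = M_k(\CB)$) and $2 \Rar 3$ (positivity of $[E_{ij}]$ as $vv^*$) are precisely the steps the paper dismisses as trivial, and your treatment of them is the standard one.
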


\begin{proof}
We only prove the non-trivial implication $3. \Rar 1.$ if $R \in (\CA \otimes M_n)^+ = M_n(\CA)^+$, and if  $R = [r_{ij}]$, then
\beast
    \phi_\CA (R) &=& (id_\CA \otimes \phi) (\sum_{ij} r_{ij}\otimes E_{ij})\\
                 &=& \sum_{ij} r_{ij} \otimes \phi(E_{ij})\\
                 &\in& M_n(\CC)^+,
\eeast
by eqn. (\ref{sumcor}).
\end{proof}

\begin{Corollary}\label{LR}
Let $R\in M_n(\CA)^+$. Then the map $M_n(\CB) \ni S \stackrel{L_R}{\rar} R \circ^\otimes S \in M_n(\CC)$ is CP. In particular $R \in M_n^+ \Rar M_n \ni S \rar R \circ S \in M_n$ is also CP.
\end{Corollary}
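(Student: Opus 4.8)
The plan is to unwind the definition of complete positivity and reduce the whole statement to a single application of Proposition \ref{p1}. Recall that $L_R$ is CP precisely when, for every $k \geq 1$, the amplification $id_{M_k} \otimes L_R$ carries positive elements of $M_k(M_n(\CB))$ to positive elements of $M_k(M_n(\CC))$. Using the right-hand version of Lemma \ref{jlem} (Remark \ref{lr}) I would make the identifications $M_k(M_n(\CB)) = M_{kn}(\CB)$ and $M_k(M_n(\CC)) = M_{kn}(\CC)$, under which the $(\alpha i,\beta j)$ entry of an element is the $(i,j)$ entry of its $(\alpha,\beta)$ block. Since these identifications are $C^*$-isomorphisms, they preserve positivity, so it suffices to show that $id_{M_k} \otimes L_R$, regarded as a map $M_{kn}(\CB) \rar M_{kn}(\CC)$, is positive.

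The key observation is that this amplified map is itself a tensorial Schur multiplication operator. Let $J_k \in M_k$ denote the $k \times k$ matrix all of whose entries equal $1$, and set $\hat{R} = J_k \otimes R \in M_k \otimes M_n(\CA) = M_{kn}(\CA)$, so that $[\hat R]_{\alpha i, \beta j} = r_{ij}$ for all $\alpha,\beta$. I claim $id_{M_k} \otimes L_R = L_{\hat R}$ on $M_{kn}(\CB)$, and this is a one-line entrywise check: for $\hat S \in M_{kn}(\CB)$ the amplification acts blockwise, so the $(\alpha i, \beta j)$ entry of its image is $r_{ij} \otimes [\hat S]_{\alpha i,\beta j}$, which is exactly $[\hat R]_{\alpha i,\beta j} \otimes [\hat S]_{\alpha i,\beta j} = [\hat R \circ^\otimes \hat S]_{\alpha i,\beta j}$.

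It then remains only to verify that $\hat R$ is positive, and here the sole (very mild) obstacle is to note that $J_k \in M_k^+$: it equals $v v^*$ for $v$ the all-ones column vector, hence is positive. Since $R \in M_n(\CA)^+$ by hypothesis, the tensor product $\hat R = J_k \otimes R$ lies in $(M_k \otimes M_n(\CA))^+ = M_{kn}(\CA)^+$. Applying Proposition \ref{p1} with $\hat R$ in place of $R$ and $kn$ in place of $n$ gives $\hat R \circ^\otimes \hat S \in M_{kn}(\CC)^+$ for every $\hat S \in M_{kn}(\CB)^+$, which is precisely the positivity of $id_{M_k} \otimes L_R$. As $k$ was arbitrary, $L_R$ is CP.

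Finally, the special case claimed at the end of the statement is just the specialization $\CA = \CB = \CC = \C$: there $\CA \otimes_{alg} \CB = \C$ and the tensorial Schur product reduces to the ordinary Schur (Hadamard) product $R \circ S = [r_{ij} s_{ij}]$, so the general assertion immediately yields that $S \rar R \circ S$ is CP whenever $R \in M_n^+$.
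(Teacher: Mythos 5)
Your proposal is correct and follows essentially the same route as the paper's own proof: both identify the amplification $id_{M_k}\otimes L_R$ with tensorial Schur multiplication by $J_k\otimes R$ on $M_{kn}(\CB)$ (using Remark \ref{lr}), note $J_k\geq 0$, and conclude by applying Proposition \ref{p1} with $kn$ in place of $n$. Your write-up merely makes a few steps more explicit, such as the factorization $J_k = vv^*$ where the paper instead observes that $J_k/k$ is a projection.
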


\begin{proof} 
To avoid confusion, we use Greek letters $\alpha, \beta$ etc., to denote elements of $\{1,2,\cdots,k\}$ and English letters $i,j$ etc. to denote elements of $\{1,2,\cdots,n\}$.  Suppose  $[\hat{S}] \in M_{kn}(\CB)^+ = M_k(M_n(\CB))^+$ is given by $[\hat{S}]_{\alpha i,\beta j} := [S_{\alpha,\beta}]_{i,j}$ (see Lemma \ref{jlem}), where of course $S_{\alpha\beta} \in M_n(\CB) \forall  \alpha,\beta $). Let  $J_k \in M_k$ be (the all 1 matrix) given by $[J_k]_{\alpha\beta} = 1 ~\forall \alpha, \beta$. Then we see that
$J_k \geq 0$ (in fact $J_k/k$ is a projection) and so,

\beast
[L_R(S_{\alpha\beta})] &=& [[r_{ij} \otimes [S_{\alpha \beta}]_{ij}]]\\
&=& [[[J_k]_{\alpha\beta} r_{ij} \otimes [S_{\alpha \beta}]_{ij}]]\\
&=& [[J_k \otimes R]_{\alpha i,\beta j} \otimes [\hat{S}]_{\alpha i,\beta j}]~~(\mbox{see Remark }\ref{lr})\\
&=&  (J_k \otimes R) \circ^\otimes \hat{S}\\
& \geq & 0, 
\eeast
by Proposition \ref{p1} applied with $R,n,S$ there replaced by $J_k \otimes R, kn, \hat{S}$, since $[J_k \otimes R]_{\alpha i,\beta j} = r_{ij} ~\forall \alpha,\beta$. The second statement of the Corollary is just the specialisation of the first statement to $\CA = \CB = \C$.
\end{proof}

\begin{Remark}\label{Lr}
The special case $n=1$ of Corollary \ref{LR} perhaps merits singling out: If $r\in \CA^+$, then the map $\CB \ni s \stackrel{L_r}{\rar} r \otimes s \in \CC$ is CP.
\end{Remark}

\begin{Remark}\label{right}
It should be clear that there is a `right' version of all the `left' statements discussed above.
\end{Remark}

The proofs suggest that these results might well admit formulations in the language of operator systems; however, we suspect that such `generalisations' will follow from nuclearity of $M_n$ and the flexibility in the choice of $\CC$ in our formulation, in view of the Choi-Effros theorem (see \cite{ChoEff}).

\end{document}